\newtheorem{defi}{Definition}
\newtheorem{theo}{Theorem}
\newtheorem{coro}{Corollary}
\newtheorem{propo}{Proposition}
\newtheorem{lema}{Lemma}
\newtheorem{conj}{Conjecture}
\newcommand{\Lx}[3][]{\mathrm{L}^{#1}_{#2}#3}
\newcommand{\Rx}[3][]{\mathrm{R}^{#1}_{#2}#3}
\newcommand{\gen}[1]{\langle #1 \rangle}
\begin{document}

\title{ Malcev algebras corresponding to smooth almost left automorphic Moufang loops}
\author{ Ramiro Carrillo-Catal\'an \\
\small CONACyT - Universidad Pedag\'ogica Nacional \\[-0.8ex]
\small Unidad 201 Oaxaca\\[-0.8ex]
\small \texttt{rcarrilloca@conacyt.mx},\\
\and  Marina Rasskazova  \\
    \small Omsk Technic University \\[-0.8ex]
\small Omsk,644050, pr.Mira 15\\[-0.8ex]
\small \texttt{marinarasskazova@yandex.ru},\\
\and
Liudmila Sabinina\\
\small Centro de Investigacion en Ciencias   \\[-0.8ex]
\small UAEM, Cuernavaca\\[-0.8ex]
\small \texttt{liudmila@uaem.mx}
\date{June 27, 2017}
}

\maketitle

\begin{abstract}

In this note we introduce the concept of an almost left automorphic Moufang loop and
study the properties of tangent algebras of  smooth  loops of this class. \\

\textbf{Key words:}  \textit{Malcev algebras, Moufang loops, Alternative loops, Automorphic loops, Local almost left automorphic Moufang loops}.\\

\textbf{2010 Mathematics Subject Classification:}  17D10, 20N05.
\end{abstract}

\section{Preliminaries }

In what follows, a
\textit{loop}  structure is a universal algebra $\gen{\mathcal{Q}, \cdot, \backslash, /,1}$ of type $(3,0,1)$
such that the identities $$ (x\cdot y)/ y= x= (x/y)\cdot y,$$
 $$ x\cdot(x\backslash y)= y= x\backslash (x\cdot y),$$
 $$ x\cdot 1=x=1\cdot x.$$ are satisfied   for any two elements in the loop.  For simplicity we will write $xy$ instead of $x\cdot y$.
 A \textit{Moufang loop} is a loop in which \textit{any} of the following equivalent identities
  $$  ((xy)x)z=x(y(xz)); \qquad ((xy)z)y=x(y(zy)); \qquad (xy)(zx)=(x(yz))x.$$
 hold for every three elements of the loop.  Moufang loops are \textit{diassociative}: the subloop generated by every two elements is a group.\\

 For an  element $a$  of $\mathcal{Q}$ the bijections $\Lx{a}:\mathcal{Q}\longmapsto \mathcal{Q}$ and $\Rx{a}: \mathcal{Q}\longmapsto \mathcal{Q}$ given by $\Lx{a}{x}=ax$ and $\Rx{a}{x}=xa$ will be called, respectively, the \textit{left} and the \textit{right translation} by $a$. The left and right translations generate the \textit{multiplication group},
$\mathrm{Mlt}(\mathcal{Q})$.  The stabilizer of the neutral element in  $\mathcal{Q}$ defines the \textit{inner mapping group} denoted by  $\mathrm{Inn}(\mathcal{Q})$. This group is actually a  subgroup of the multiplication group  generated by three different types of elements of $\mathrm{Mlt}(\mathcal{Q})$, namely
$$\ell_{x,y}=\Lx[-1]{xy}\circ\Lx{x}\circ\Lx{y}, \qquad \mathrm{r}_{x,y}=\Rx[-1]{xy}\circ \Rx{y}\circ\Rx{x} \qquad \text{and} \qquad \mathrm{T}_{x}=\Lx[-1]{x}\circ\Rx{x}$$
for every two elements $x$ and $y$ in $\mathcal{Q}$.
If the group  $\mathrm{Inn}(\mathcal{Q})$  acts on $\mathcal{Q}$  by automorphisms, the loop $\mathcal{Q}$ is said to be \textit{automorphic}.  Equivalently, the loop is \textit{automorphic} if the mappings $\ell_{x,y}$, $\mathrm{r}_{x,y}$ and $\mathrm{T}_{x}$ are automorphisms of $\mathcal{Q}$.
\ All the left translations by elements in $\mathcal{Q}$ generate the  \textit{left multiplication} subgroup of $\mathrm{Mlt}(\mathcal{Q})$, denoted by $\mathrm{LMlt}(\mathcal{Q})$.  The stabilizer of the neutral element in   $\mathrm{LMlt}(\mathcal{Q})$ defines the \textit{left inner mapping} group, $\mathrm{LInn}(\mathcal{Q})$, generated by  the mappings $\ell_{x,y}.$ If  $\mathrm{LInn}(\mathcal{Q})$ is a group of  automorphisms of $\mathcal{Q}$

then $\mathcal{Q}$ will be called a \textit{left automorphic} loop.\\

\begin {defi} A Moufang loop  $L$  with the property that every three elements of $L$  generate a left  automorphic subloop
 will be called an \textit{ almost  left automorphic} Moufang loop.\\
\end{defi}
In the differential geometry framework the notion of a loop being  left automorphic is important: a loop with the  left automorphic and left power alternativity properties  completely defines a reductive homogeneous space. [K3], [SLV].
Following  ideas of M. Kikkawa  [K1] we  have studied [CS1] the so-called \textit{reductive Kikkawa} spaces. There it is shown that smooth left automorphic power alternative loops are Moufang.

 The corresponding tangent algebra  $\mathcal{A}$ of a reductive  Kikkawa space is characterized as an anticommutative algebra satisfying the relations
\begin{align*}
J(xy, z, u)&+ J(yz, x, u) + J(zx, y, u) = 0, \\
  J(x, y, uv)&=  J(x, y, u)v - J(x, y, v)u , \qquad  \forall  x, y, z, u, v \in \mathcal{A}.
\end{align*}
where $J(x, y, z):=(xy)z+(yz)x+(zx)y$. \\

In ([CS1], p.6, T.2,13 ) it was also proved that tangent algebras of reductive Kikkawa spaces are a certain type of \textit{Malcev} algebras (anticommutative algebras satisfying  $J(x, y, xz)=J(x, y, z)x$, see [Sa]).  More precisely, we have
\newpage
\begin{theo}\label{CS} [Carrillo-Catal\'an, Sabinina]
Let  $\mathcal{A}$ be an anticommutative algebra  for which the following identities hold for every
five elements $x, y, z, u, v$ of $\mathcal{A}$:
\begin{align}
J(xy, z, u)&+ J(yz, x, u) + J(zx, y, u) = 0, \label{firstp} \\
  J(x, y, uv)&=  J(x, y, u) v - J(x, y, v) u, \label{secondp}
\end{align}
If the characteristic of $\mathcal{A}$ is not 2 then $\mathcal{A}$  is contained in the variety of  Malcev algebras defined by the identity:
\begin{align}
J(x, y, z)x&=J(x, y, xz)=0 \label{trivialmalcev}, \qquad  \forall  x, y, z \in \mathcal{A}.
\end{align}
\end{theo}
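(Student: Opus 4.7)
My plan is to exploit the fact that, under anticommutativity, the Jacobian $J(x,y,z)=(xy)z+(yz)x+(zx)y$ is totally skew-symmetric in its three arguments, and that identity \eqref{secondp} says precisely that the map $D_{x,y}:=J(x,y,-)$ is a derivation of $\mathcal{A}$ (the apparent minus sign in $J(x,y,u)v-J(x,y,v)u$ becomes a plus once one rewrites the second term as $uJ(x,y,v)$ via anticommutativity). In particular, $J(x,y,x)=0$ by skew-symmetry.

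The first step will be to extract two useful specializations of \eqref{secondp}. Setting $u=x$, $v=z$ yields $J(x,y,xz)=J(x,y,x)z-J(x,y,z)x=-J(x,y,z)x$, and interchanging the roles of $u$ and $v$ gives $J(x,y,zx)=J(x,y,z)x$. Consequently, to establish both halves of \eqref{trivialmalcev}, it suffices to prove that $J(x,y,z)x=0$ for all $x,y,z\in\mathcal{A}$.

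For this, I would substitute $u=x$ into \eqref{firstp}: the middle term $J(yz,x,x)$ vanishes because $J$ is alternating, so the identity reduces to $J(xy,z,x)+J(zx,y,x)=0$. I then rewrite each summand by moving $x$ back into the first slot via skew-symmetry and apply the specializations just obtained. A short computation yields $J(xy,z,x)=-J(x,z,xy)=J(x,z,y)x=-J(x,y,z)x$ and $J(zx,y,x)=-J(x,y,zx)=-J(x,y,z)x$. Adding gives $-2J(x,y,z)x=0$, and since the characteristic is not $2$ we conclude $J(x,y,z)x=0$; combined with the formulas above, $J(x,y,xz)=0$, which is \eqref{trivialmalcev}.

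I do not anticipate a serious obstacle: the argument reduces to a short manipulation using the skew-symmetry of $J$ and the derivation-type identity \eqref{secondp}. The subtle point worth emphasizing is that \eqref{secondp} by itself forces only $J(x,y,xz)=-J(x,y,z)x$, which has the \emph{opposite} sign to the usual Malcev identity $J(x,y,xz)=J(x,y,z)x$; identity \eqref{firstp} is exactly what is needed to combine the two relations into $2J(x,y,z)x=0$, so the hypothesis $\mathrm{char}\,\mathcal{A}\neq 2$ enters crucially at the final step.
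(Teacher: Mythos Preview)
Your argument is correct. The skew-symmetry of $J$ in an anticommutative algebra, the specialization of \eqref{secondp} at $u=x$ giving $J(x,y,xz)=-J(x,y,z)x$, and the specialization of \eqref{firstp} at $u=x$ killing the middle term combine exactly as you describe to yield $-2J(x,y,z)x=0$; the hypothesis on the characteristic is used precisely where you indicate.

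One remark on the comparison you asked for: this theorem is quoted in the present paper from the earlier work [CS1] and is \emph{not} re-proved here, so there is no in-paper proof to set your argument against. Your write-up stands as a clean, self-contained verification. If you want it to be maximally transparent to a reader, you might state explicitly at the end that the Malcev identity $J(x,y,xz)=J(x,y,z)x$ is now satisfied trivially (both sides vanish), so that ``$\mathcal{A}$ lies in the variety of Malcev algebras defined by \eqref{trivialmalcev}'' is fully justified, not just the two vanishing identities.
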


It turns out that the converse of last theorem is not true. In what follows, let us call \textit{Malcev algebras of the first type} those Malcev algebras which satisfy the identities (\ref{firstp}) and (\ref{secondp}). In the same way,  those  Malcev algebras which satisfy (\ref{trivialmalcev}) will be called \textit{ Malcev algebras of the second type}.\\

Malcev algebras of the second type form a larger variety of algebras than Malcev algebras of the first type.
In 2015 I. Shestakov  constructed an algebra  of dimension $29$ by using the Non Associative Algebra System [Albert] (Jacobs, Muddana and Offutt, 1993),
which is a Malcev algebra of the second type, but  not a Malcev algebra of the first type, [SIP].\\
In this paper we construct a new example of Malcev algebra of the second type, but  not a Malcev algebra of the first type of dimension
$23$, moreover, we think that this example has the minimal dimension.
\begin{conj}\label{c1}
Every Malcev algebra the second type of dimension $<23$ over a field of characteristic 0 is an algebra of the first type.
\end{conj}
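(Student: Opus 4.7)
The plan is to combine the Levi decomposition of finite-dimensional Malcev algebras in characteristic zero with a low-dimensional classification. By the Malcev analogue of the Levi theorem, any such algebra $\mathcal{A}$ decomposes as $\mathcal{A}=S\oplus R$, where $R$ is the solvable radical and $S$ is semisimple, isomorphic to a direct sum of copies of $\mathfrak{sl}_2$ and of the seven-dimensional simple non-Lie Malcev algebra $\mathfrak{M}_7$ built from the traceless octonions. First I would verify identities (\ref{firstp}) and (\ref{secondp}) directly on each simple summand: on $\mathfrak{sl}_2$ they hold trivially because $J\equiv 0$, and on $\mathfrak{M}_7$ they can be checked by a bounded computation on a fixed basis, so the two identities hold on $S$.

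A useful reformulation is that identity (\ref{secondp}) asserts that, for each pair $(x,y)$, the Jacobian operator $D_{x,y}\colon w\mapsto J(x,y,w)$ is a derivation of $\mathcal{A}$, while identity (\ref{firstp}) controls how $D$ depends on its first two arguments. I would reduce the conjecture to showing that, in every Malcev algebra of dimension at most $22$, these operators are automatically derivations and satisfy the cyclic condition. After isolating the semisimple part, the remaining work concerns the action of $S$ on $R$ and the internal structure of $R$; one would enumerate nilpotent Malcev algebras of the relevant dimension together with the admissible $S$-module structures on them, and check the two identities on each, for instance with the Albert system used by Shestakov or with a \textsc{Gap} implementation for non-associative algebras.

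The main obstacle is this enumeration: a complete classification of nilpotent Malcev algebras is available only in rather small dimensions, and the number of isomorphism classes grows rapidly. A more conceptual alternative would be to identify an invariant of a potential counterexample, such as the minimal dimension of an indecomposable Malcev module carrying an obstruction to (\ref{secondp}), and to bound it from below. Shestakov's $29$-dimensional example and the $23$-dimensional example constructed in this paper should indicate the likely shape of such an obstruction, and I expect deriving a sharp lower bound of $23$ from representation-theoretic data to be the most delicate step, since this is precisely where the specific numerical threshold in the conjecture is decided.
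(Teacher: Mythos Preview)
The statement you are addressing is labelled a \emph{Conjecture} in the paper, not a theorem, and the paper offers no proof of it; the authors merely state that they believe their $23$-dimensional example has minimal dimension. There is therefore no proof in the paper to compare your attempt against. What you have written is a research programme rather than a proof: you yourself note that the required enumeration of nilpotent Malcev algebras is not available in the relevant range and that obtaining the sharp bound $23$ is ``the most delicate step''. That step is the entire content of the conjecture, so nothing has actually been established.

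Your outline also contains a concrete error. You assert that identities~(\ref{firstp}) and~(\ref{secondp}) can be checked directly on the simple seven-dimensional non-Lie Malcev algebra $\mathfrak{M}_7$. This is false: by Lemma~1, an algebra of the first type (in characteristic $\neq 2,3$) satisfies $J(x,y,z)u=0$ for all $x,y,z,u$, and by Theorem~\ref{th}(E) every algebra of the second type satisfies $J(\mathsf{A},\mathsf{A},\mathsf{A})^2=0$; either condition forces a simple algebra to be Lie. Hence $\mathfrak{M}_7$ is neither of the first nor of the second type and cannot occur as a Levi factor of a second-type algebra at all. Indeed, the Corollary following Theorem~\ref{th} shows that every semiprime second-type algebra is already a Lie algebra, so the semisimple part $S$ in your Levi decomposition is automatically a semisimple Lie algebra. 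This observation prunes one branch of your plan but does nothing to resolve the core difficulty, which remains open.
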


In this work we try  to answer the question:
What kind of smooth loop corresponds to a Malcev algebra of the second type?

\section{Malcev algebras of the first type}

  It is possible to get equivalent characterizations of Malcev algebras of the first type by using Sagle's properties of Malcev algebras.
The following result gives the equivalences. \\

\begin{lema} $\mathcal{A}$ is
an anticommutative algebra  for which the  identities (\ref{firstp})  and  (\ref{secondp}) hold for every
five elements of $\mathcal{A}$ if and only if  $\mathcal{A}$  is a Malcev algebra such that for every four elements of $\mathcal{A}$ the identity
\begin{align}
J(x, y, uv)&=0  \label{zerojacobian}
\end{align}
holds, provided the characteristic of $\mathcal{A}$ is not 2.  In the special case where the characteristic of $\mathcal{A}$ is neither 2 nor 3, the former  identity  is equivalent to having
\begin{align}
J(x, y ,z)u&=0,   \label{zerojacobian2}
\end{align}
for every four elements of $\mathcal{A}$.
 \end{lema}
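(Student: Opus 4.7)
The plan is to prove the main equivalence in characteristic not $2$, then obtain the supplementary equivalence with (\ref{zerojacobian2}) in characteristic not $2$ or $3$. The central tool is the linearization of the Malcev-second-type identity $J(x,y,z)x=0$, which upgrades via the full antisymmetry of $J$ (automatic in an anticommutative algebra) to an auxiliary antisymmetry of the ``product of Jacobian with an element''.

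For $(\Rightarrow)$, Theorem~\ref{CS} already gives that $\mathcal{A}$ is Malcev and that $J(x,y,z)x=0$. Linearizing this identity via $x\mapsto x+tw$ and reading off the coefficient of $t$ yields $J(x,y,z)w+J(w,y,z)x=0$, which via full antisymmetry of $J$ upgrades to
\begin{equation*}
(\star)\qquad J(x,y,z)w+J(x,y,w)z=0.
\end{equation*}
Substituting $(\star)$ into (\ref{secondp}) produces $J(x,y,uv)=2J(x,y,u)v$. To force this to vanish, I rewrite each term of (\ref{firstp}) using cyclic symmetry (to put the product into the third slot) and (\ref{secondp}); after collecting like terms and using antisymmetry, (\ref{firstp}) reduces, in characteristic not $2$, to
\begin{equation*}
J(x,u,y)z+J(y,u,z)x+J(z,u,x)y=0.
\end{equation*}
A second application of $(\star)$, together with antisymmetry, shows that all three summands coincide, so the sum equals $3J(x,u,y)z$. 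In characteristic not $3$ this forces $J(x,u,y)z=0$, and hence $J(x,y,uv)=0$.

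For $(\Leftarrow)$, assume $\mathcal{A}$ is Malcev and $J(x,y,uv)=0$. Since $xz$ is a product, the Malcev identity gives $J(x,y,z)x=J(x,y,xz)=0$, and the linearization again produces $(\star)$. Identity (\ref{firstp}) is now immediate: by cyclic symmetry each summand is a Jacobian with a product in the third slot, hence zero by hypothesis. For (\ref{secondp}), the left-hand side is zero and, by $(\star)$, the right-hand side equals $2J(x,y,u)v$, so (\ref{secondp}) is equivalent to (\ref{zerojacobian2}). The supplementary equivalence (\ref{zerojacobian})$\Leftrightarrow$(\ref{zerojacobian2}) in characteristic not $2,3$ is handled by the same tools: (\ref{zerojacobian2})$\Rightarrow$(\ref{zerojacobian}) is immediate from (\ref{secondp}), while (\ref{zerojacobian})$\Rightarrow$(\ref{zerojacobian2}) recycles the cyclic identity and $(\star)$ from the forward argument above, since both hold as soon as Malcev and (\ref{zerojacobian}) do.

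I expect the main obstacle to be the reduction of (\ref{firstp}) to the cyclic identity $J(x,u,y)z+J(y,u,z)x+J(z,u,x)y=0$: this step requires three applications of cyclic permutation, (\ref{secondp}), and careful sign bookkeeping via full antisymmetry of $J$. Once that identity is in hand, $(\star)$ collapses its three summands into a common value, producing the factor $3$, and the remainder of the argument is routine.
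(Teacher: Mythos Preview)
Your argument is correct when the characteristic is neither $2$ nor $3$, but the lemma asserts the main equivalence assuming only characteristic $\neq 2$. Your forward direction explicitly invokes characteristic $\neq 3$: after reducing (\ref{firstp}) via (\ref{secondp}) to the cyclic identity and then using $(\star)$ to make the three summands coincide, you obtain $3J(x,u,y)z=0$, and you need to divide by $3$. The backward direction inherits the same problem, because to verify (\ref{secondp}) you need $J(x,y,u)v=0$, and your route to that (``recycle the cyclic identity and $(\star)$'') again passes through the factor $3$; moreover, in the backward setting the cyclic identity is not available by your derivation, since that derivation used (\ref{secondp}), which is precisely what you are trying to establish.

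The paper avoids the factor $3$ by invoking Sagle's identity ([Sa], Lemma 2.14), valid in any Malcev algebra of characteristic $\neq 2$:
\[
-2\,uJ(x,y,z)=J(yz,x,u)+J(zx,y,u)+J(xy,z,u).
\]
The right-hand side is exactly the left-hand side of (\ref{firstp}), so in the forward direction (\ref{firstp}) immediately gives $2\,uJ(x,y,z)=0$, hence (\ref{zerojacobian2}), and then (\ref{secondp}) yields (\ref{zerojacobian}). In the backward direction, each summand on the right is a Jacobian with a product in one slot, so (\ref{zerojacobian}) kills all three and again $2\,uJ(x,y,z)=0$; this gives (\ref{zerojacobian2}), from which (\ref{secondp}) follows. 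Thus the missing ingredient in your proof is precisely Sagle's 2.14 (or an equivalent Malcev-algebra identity producing the coefficient $2$ rather than $3$); your linearization $(\star)$ by itself, being a consequence only of the second-type identity, is not strong enough to close the gap in characteristic $3$.
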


\begin{proof}
From the last result we know that $\mathcal{A}$ is a Malcev algebra for which (\ref{trivialmalcev}) holds. Now, since the characteristic of $\mathcal{A}$  is not 2, the identity (\ref{firstp}) can be expressed, by using Sagle's properties for Malcev algebras ([Sa], p.429, L. 2.10, 2.14) and some odd permutations,  as:
\begin{align*}
-2uJ(x, y, z)&=-J(u, x, yz)-J(u, y, zx)-J(u, z, xy)\\
&=J(yz, x, u)+J(zx, y, u)+J(xy, z, u)\\
&=0.
\end{align*}
Therefore the identity (\ref{firstp}) can be replaced by the expression
$J(x, y, z)u=0$.
   Moreover, this identity can be used to rewrite (\ref{secondp}). Namely, If  $J(x, y, z)u=0$ holds  for any four elements of $\mathcal{A}$ then
  $J(x, y, u)v=J(x, y, v)u=0$  and therefore $J(x, y, uv)=0$. \\

Conversely, suppose $\mathcal{A}$  is a Malcev algebra such that  for every four elements (\ref{zerojacobian}) holds.
Then, being a Malcev algebra, the linearized form of the Malcev property  ([Sa], p.429, 2.7):
$$J(x, y, wz)=J(x, y, z)w+J(w, y, z)x-J(w, y, xz) $$ is satisfied for any  $x, y, z, w$ in $\mathcal{A}$.  In particular, if (\ref{zerojacobian}) holds, then
$$J(x, y, v)u=-J(u, y, v)x$$
for any four elements in $\mathcal{A}$. Then, after an odd permutation,  for any four elements in the algebra:
$$J(x, y, u)v-J(x, y, v)u=-J(v, y, u)x-(-J(u, y, v))x=2J(u, y, v)x. $$

If $\mathcal{A}$ is a Malcev algebra with characteristic different from 2, then $2J(u, y, v)x=0$ ([Sa], p.429, 2.14). Hence the identity (\ref{secondp}) holds trivially:
$$ J(x, y ,uv)-J(x, y, u)v+J(x, y, v)u=0.$$

For a   Malcev algebra of  characteristic different from 2 or 3  the identities $J(x, y, uv)=0$ and $J(x, y, z)u=0$ are equivalent. 
To see this, suppose  that $J(x, y, uv)=0$ for every four elements of the algebra. Since the characteristic of the algebra is not 2 and due to Malcev algebra properties ([Sa], p.429, 2.14) we should have:
$$-2uJ(x, y, z)=J(yz, x, u)+J(zx, y, u)+J(xy, z, u)=0, $$
hence  $J(x, y, z)u=0$. Conversely, assume the characteristic of the algebra is different from 3, then once again due to Malcev algebra properties 
([Sa], p.430, 2.15) we know that
$$3J(wx, y, z)=J(x, y, z)w-J(y, z, w)x-2J(z, w, x)y+2J(w, x, y)z.$$

Therefore, if  $J(x,y,z)u=0$ then $J(wx,y,z)=0$ for any four elements of the algebra.
Hence $J(y,z,wx)=0$.\\

Finally, to prove (\ref{zerojacobian2}), suppose that for every four elements in $\mathcal{A}$ the  identities (\ref{firstp})  and  (\ref{secondp}) hold. Then by Theorem \ref{CS} $\mathcal{A}$ is a Malcev algebra. As before, the identity (\ref{firstp}) and Sagle's properties for Malcev algebras  ([Sa], p.429, 2.14) can be combined to get
(\ref{zerojacobian2}).  Conversely, we know that (\ref{zerojacobian2}) holds in $\mathcal{A}$  if and only if (\ref{zerojacobian}) is satisfied for every four elements of the algebra.  But the identity (\ref{zerojacobian}) directly implies (\ref{firstp})  and  (\ref{secondp}) as shown above.\\
\end{proof}

From all of the above, we conclude that an anticommutative algebra with characteristic different from 2 and 3 and satisfying the identities (\ref{firstp}) and  (\ref{secondp}) is equivalent to a Malcev algebra for which  (\ref{zerojacobian}) or   (\ref{zerojacobian2}) holds for every four elements in the algebra. In other words,
Malcev algebras of the first type can be characterized as algebras for which (\ref{firstp}) and  (\ref{secondp}) hold or as Malcev algebras satisfying (\ref{zerojacobian}) or as Malcev algebras satisfying (\ref{zerojacobian2})
or as anticommutative algebra satisfying (\ref{zerojacobian}) and (\ref{zerojacobian2}),
provided the characteristic of the algebra is not 2 nor 3.\\

In what follows we will use  the definition of Malcev algebra of the first type  as a  Malcev algebra with the identity  (\ref{zerojacobian}). It is known that the tangent algebra of a smooth diassociative loop is a Binary Lie algebra, which can be defined by the identity $J(x,y,xy)=0$
(see  [GA])

\section{Malcev algebras of second type}

Let $\mathsf{A}$ be a Malcev algebras of the second type. In this section we will prove that such algebras turn out to be tangent of a local  almost left automorphic Moufang loops. \\

We begin with  statements  which are direct consequences of Malcev algebras properties:
\begin{propo}
If $y$ and $z$ are two elements of $\mathsf{A}$ such that $J(y,z,\mathsf{A})=0$ then $yz$ belongs to the Lie kernel of $\mathsf{A}$, that is, $yz\in N(\mathsf{A})=\{x\in \mathsf{A}\  | \ J(x,\mathsf{A},\mathsf{A})=0\}$.
\end{propo}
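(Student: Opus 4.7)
My goal is to show $J(yz, a, b) = 0$ for every $a, b \in \mathsf{A}$. The plan is to combine the two Sagle identities already used in the lemma above,
$$3 J(wx, y, z) = J(x, y, z)\,w - J(y, z, w)\,x - 2 J(z, w, x)\,y + 2 J(w, x, y)\,z$$
(Sagle $2.15$) and $-2 u\, J(p, q, r) = J(qr, p, u) + J(rp, q, u) + J(pq, r, u)$ (Sagle $2.14$), with the linearization $J(x, y, z)\,w + J(w, y, z)\,x = 0$ of the second-type identity $J(x, y, z)\,x = 0$.

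First I would apply Sagle $2.15$ with $w = y$ and $x = z$, the two distinguished elements. The hypothesis $J(y, z, \mathsf{A}) = 0$ and the alternativity of $J$ eliminate the two terms $-2 J(b, y, z)\,a$ and $2 J(y, z, a)\,b$, and a cyclic rearrangement of the rest yields $3 J(yz, a, b) = J(a, b, z)\,y - J(a, b, y)\,z$. The linearized second-type identity, read as $J(a, b, z)\,y = -J(a, b, y)\,z$, then collapses this to
$$3 J(yz, a, b) = -2 J(a, b, y)\,z,$$
so it will remain to show $J(a, b, y)\,z = 0$ for all $a, b$.

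For this I would apply Sagle $2.14$ to the triple $(p, q, r) = (a, b, y)$ with $u = z$. One of the three resulting summands, $J(ab, y, z)$, vanishes by the hypothesis with $v = ab$. The other two, $J(by, a, z)$ and $J(ya, b, z)$, I would re-expand using Sagle $2.15$; the hypothesis again kills every Jacobian containing both $y$ and $z$, and the linearized second-type identity reduces each of the two to $\tfrac{1}{3} J(a, b, y)\,z$. Substituting back into Sagle $2.14$ yields $2 J(a, b, y)\,z = \tfrac{2}{3} J(a, b, y)\,z$, and since $\mathrm{char}\,\mathsf{A} \ne 2, 3$ this forces $J(a, b, y)\,z = 0$, and hence $J(yz, a, b) = 0$.

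The main obstacle will be this last step. A single use of Sagle $2.15$ only trades the unknown $J(yz, a, b)$ for the equally unknown $J(a, b, y)\,z$, which the hypothesis alone cannot touch: $J(a, b, y)$ contains only $y$, not both $y$ and $z$. The decisive observation is that Sagle $2.14$ applied to $J(a, b, y)$ with auxiliary element $z$ produces exactly one Jacobian, $J(ab, y, z)$, in which both $y$ and $z$ appear (so the hypothesis acts), while the other two summands, after Sagle $2.15$, fold back onto $J(a, b, y)\,z$ with a coefficient giving a self-referencing linear equation that forces it to vanish.
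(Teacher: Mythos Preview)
Your argument is correct, but it is considerably more laborious than the paper's, and it uses strictly more hypotheses.

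The paper dispatches the proposition in one line by quoting a different Sagle identity, namely $2.26$:
\[
J(wx,y,z)=w\,J(x,y,z)+J(w,y,z)\,x+2\,J(yz,w,x).
\]
Solving for the last term gives $2\,J(yz,w,x)=J(wx,y,z)-w\,J(x,y,z)-J(w,y,z)\,x$, and every Jacobian on the right contains both $y$ and $z$, so each vanishes by the hypothesis $J(y,z,\mathsf{A})=0$. Hence $J(yz,\mathsf{A},\mathsf{A})=0$ in any Malcev algebra of characteristic $\neq 2$.

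Your route, by contrast, combines Sagle $2.15$ and $2.14$ with the linearization $J(a,b,z)\,y+J(a,b,y)\,z=0$ of the second-type identity $J(x,y,z)\,x=0$, and needs $\mathrm{char}\neq 3$ to close the self-referencing equation $2\,J(a,b,y)\,z=\tfrac{2}{3}\,J(a,b,y)\,z$. This works, but it invokes the second-type identity where the paper does not: the paper's proof actually shows the proposition holds in \emph{any} Malcev algebra, not just those of second type. So the key identity you did not exploit is Sagle $2.26$, which packages $J(yz,\cdot,\cdot)$ directly in terms of Jacobians already containing $y$ and $z$, making the detour through $J(a,b,y)\,z$ unnecessary.
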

\begin{proof}
Since $\mathsf{A}$ is a  Malcev  algebra, it follows from Sagle ([Sa], p.431, 2.26) that the identity \begin{align}
J(wx,y,z)=wJ(x,y,z)+J(w,y,z)x+2J(yz,w,x), \label{sagle1}
\end{align}
holds for any four elements of the algebra. Now, if $J(y,z,\mathsf{A})=0$  then $2J(yz,\mathsf{A},\mathsf{A})=0$.\\
\end{proof}

\begin{propo}

$\mathsf{A}^{4}\subseteq N(\mathsf{A})$.
\end{propo}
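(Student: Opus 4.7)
The plan is to reduce, via the preceding proposition, to showing that two families of Jacobians vanish, and then discharge those vanishings by expanding with Sagle's identities and polarizing (\ref{trivialmalcev}). Since $\mathsf{A}^{4} = \mathsf{A}\cdot\mathsf{A}^{3} + \mathsf{A}^{2}\cdot\mathsf{A}^{2}$, every element of $\mathsf{A}^{4}$ splits as a product $yz$ with the weights of $y$ and $z$ summing to $4$, so by the previous proposition it suffices to verify
$$
J(\mathsf{A}, \mathsf{A}^{3}, \mathsf{A}) = 0 \qquad \text{and} \qquad J(\mathsf{A}^{2}, \mathsf{A}^{2}, \mathsf{A}) = 0.
$$
The main tools will be Sagle's expansions $J(ab, c, d) = aJ(b, c, d) + J(a, c, d)b + 2J(cd, a, b)$ ([Sa], 2.26) and $3J(ab, c, d) = J(b, c, d)a - J(c, d, a)b - 2J(d, a, b)c + 2J(a, b, c)d$ ([Sa], 2.15), together with the two polarizations of (\ref{trivialmalcev}): $xJ(w, y, z) + wJ(x, y, z) = 0$ (from $J(x, y, z)x = 0$) and $J(x, y, uz) + J(u, y, xz) = 0$ (from $J(x, y, xz) = 0$).

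The engine of the argument is an auxiliary identity
$$
(uv)\, J(x, y, w) = 0 \qquad \text{for all } u, v, x, y, w \in \mathsf{A},
$$
which I would establish by computing $J(xy, uv, w)$ in two different ways. Applying ([Sa], 2.26) first to $J(xy, uv, w)$, and then again to the resulting Jacobian $J((uv)w, x, y)$, and systematically collapsing every intermediate term to the form $(uv)\, J(x, y, w)$ using the polarized identities and the full antisymmetry of $J$, should yield $J(xy, uv, w) = -2(uv)\, J(x, y, w)$. Applying ([Sa], 2.15) instead and simplifying in the same manner yields $J(xy, uv, w) = \tfrac{2}{3}(uv)\, J(x, y, w)$. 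Provided the characteristic of $\mathsf{A}$ is different from $2$ and $3$, the incompatibility of the coefficients $-2$ and $\tfrac{2}{3}$ forces $(uv)\, J(x, y, w) = 0$; the $(2, 2)$-case then drops out of either expression.

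For the $(1, 3)$-case, I would write a generic element of $\mathsf{A}^{3}$ as $(bc)d$ or $b(cd)$ and expand $J(a, p, v)$ via ([Sa], 2.26). Each resulting summand is either a product of an element of $\mathsf{A}^{2}$ with a Jacobian $J(\mathsf{A}, \mathsf{A}, \mathsf{A})$ — killed by the auxiliary identity, after at most one application of the polarized identity to move a factor across — or an instance of $J(\mathsf{A}^{2}, \mathsf{A}^{2}, \mathsf{A})$, already known to vanish by the $(2,2)$-case. The main obstacle is the derivation of the auxiliary identity itself: both expansions of $J(xy, uv, w)$ produce several intermediate Jacobians with $uv$ in various slots and with an external multiplication by a simple element of $\mathsf{A}$, and the polarized identities together with the cyclic and antisymmetric symmetries of $J$ must be applied with careful sign bookkeeping so as to collapse every monomial to the single shape $(uv)\, J(x, y, w)$ and produce two incompatible rational coefficients, rather than an uninformative tautology.
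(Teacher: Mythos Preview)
Your approach is sound --- I checked the two expansions of $J(xy,uv,w)$ and, after collapsing via the polarized forms of (\ref{trivialmalcev}), one does obtain $-2(uv)J(x,y,w)$ and $\tfrac{2}{3}(uv)J(x,y,w)$, forcing $(uv)J(x,y,w)=0$ in characteristic $\neq 2,3$; the $(2,2)$- and $(1,3)$-cases then fall out as you describe. Note, however, that once you have $J(\mathsf{A},\mathsf{A}^{3},\mathsf{A})=0$ you have actually shown $\mathsf{A}^{3}\subseteq N(\mathsf{A})$ outright, which is stronger than the proposition and makes the appeal to Proposition~1 unnecessary.

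The paper's proof is quite different and much shorter. It observes that, since $J(x,y,xz)=0$ for all $y$, Proposition~1 gives $x(xz)\in N(\mathsf{A})$; hence the quotient $\mathsf{A}/N(\mathsf{A})$ is an anticommutative algebra satisfying the identity $x(xy)=0$, and a result of Elduque [EA] then yields $\bigl(\mathsf{A}/N(\mathsf{A})\bigr)^{4}=0$, i.e.\ $\mathsf{A}^{4}\subseteq N(\mathsf{A})$. So the paper trades your explicit Sagle computations for a single external citation. What you are doing is, in effect, a hands-on variant of the paper's \emph{later} Theorem~\ref{th}, where $J(\mathsf{A}^{2},\mathsf{A}^{2},\mathsf{A})=0$ and $J(\mathsf{A}^{3},\mathsf{A},\mathsf{A})=0$ are obtained more economically by first packaging your polarized identities as the full skew-symmetry of the maps $\xi(x_{1},x_{2},x_{3},x_{4})=J(x_{1},x_{2},x_{3}x_{4})$ and $\varsigma(x_{1},\dots,x_{5})=J(x_{1}x_{2},x_{3}x_{4},x_{5})$; the skew-symmetry of $\varsigma$ alone then kills $J(\mathsf{A}^{2},\mathsf{A}^{2},\mathsf{A})$ without any comparison of coefficients. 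Your route is self-contained but more laborious; the paper's route for this proposition is terse but leans on [EA], and the cleaner direct argument is postponed to Theorem~\ref{th}.
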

\begin{proof}
By the last result, since the identity  (\ref{trivialmalcev}) holds in $\mathsf{A}$, we know that  $x(xz)\in N(\mathsf{A})$ for any two elements of the algebra.  Then the quotient algebra $\mathsf{A}/N(\mathsf{A})$ must satisfy  $x(xy)=0$.
As proved in ([EA], Prop 3.1, p.1-5)  we can conclude that $\left(\mathsf{A}/N(\mathsf{A})\right)^{4}=0$ or, equivalently,
$\mathsf{A}^{4} \subseteq N(\mathsf{A})$.\\
\end{proof}

The above ideas can be generalized by using the fact that the Jacobian is a skew-symmetric multilinear map.
\begin{lema}
The multilinear maps $\xi, \zeta, \varsigma: \mathsf{A}\times\mathsf{A}\times\mathsf{A}\times\mathsf{A}\longrightarrow\mathsf{A}$  given by
\begin{align*}
\xi (x_{1},x_{2},x_{3},x_{4})&= J(x_{1},x_{2},x_{3}x_{4}) \\
\zeta (x_{1},x_{2},x_{3},x_{4})&=  J(x_{1},x_{2},x_{3})x_{4} \\
\varsigma(x_{1},x_{2},x_{3},x_{4},x_{5})&= J(x_{1}x_{2},x_{3}x_{4},x_{5})
\end{align*}
are skew-symmetric.
\end{lema}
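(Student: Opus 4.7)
The plan hinges on the standard criterion that a multilinear map $f \colon \mathsf{A}^n \to \mathsf{A}$ is skew-symmetric if and only if it vanishes on every tuple with a repeated argument. For each of the three maps I would enumerate the coincidence cases and show the output is zero. The tools throughout are: (i) anticommutativity, so $aa = 0$; (ii) the Jacobian $J$ is alternating in its three slots, since it is built from an anticommutative product; and (iii) the second-type Malcev identity (\ref{trivialmalcev}), $J(x,y,z)\,x = J(x, y, xz) = 0$.

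For $\xi(x_1, x_2, x_3, x_4) = J(x_1, x_2, x_3 x_4)$ the case $x_1 = x_2$ is killed by (ii), the case $x_3 = x_4$ by (i), and each of the four cross-cases (for instance $x_1 = x_3$, giving $J(x_1, x_2, x_1 x_4)$) reduces, after at most one alternating reordering of $J$ or one application of anticommutativity inside the product, to a pattern $J(x_i, \cdot, x_i\cdot)$ covered by (iii). For $\zeta(x_1, x_2, x_3, x_4) = J(x_1, x_2, x_3)\,x_4$ the three coincidences inside $\{x_1, x_2, x_3\}$ die by (ii), and the three coincidences of $x_4$ with one of the first three arguments die by the right-multiplication form $J(x,y,z)\,x = 0$ of (iii), again after an alternating reordering of $J$'s entries when necessary.

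For $\varsigma$, stated on five inputs as $\varsigma(x_1,\ldots,x_5) = J(x_1 x_2,\, x_3 x_4,\, x_5)$ (the codomain $\mathsf{A}^4 \to \mathsf{A}$ in the statement appears to be a typo), the coincidences $x_1 = x_2$ and $x_3 = x_4$ are killed by (i). Each of the four cases $x_5 = x_i$ with $i \in \{1, 2, 3, 4\}$ reduces, via alternating reorderings of $J$ combined with the anticommutativity $x_j x_k = -x_k x_j$ applied inside the appropriate product, to $J(x_i,\cdot, x_i \cdot) = 0$ by (iii). The main obstacle is the four remaining cross-coincidences $x_1 = x_3$, $x_1 = x_4$, $x_2 = x_3$, $x_2 = x_4$: all of them reduce by anticommutativity on the two inner products to the non-trivial identity
\[
J(xy,\,xz,\,u) = 0
\]
in a Malcev algebra of the second type, which does not follow from (\ref{trivialmalcev}) by reordering alone. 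To handle this I would apply Sagle's linearization identity (\ref{sagle1}) to the first and to the second slot of $J(xy, xz, u)$ in turn: in each expansion the middle term $J(x, xz, u)$ or $J(x, xy, u)$ vanishes by (iii) via alternating, while the third summand $2J(yz, w, x)$ produces another copy of $J(xy, xz, u)$ itself (through Sagle's term $2 J((xz)u, x, y)$ after a further expansion). Comparing the two expansions so as to collapse those recurrences, and using that $J(x_1 x_2, x_1, x_4) = -J(x_1, x_1 x_2, x_4) = 0$ by (iii), should close the system and yield $J(xy, xz, u) = 0$. This combinatorial closure step is, in my view, the one delicate part of the proof; once established, the skew-symmetry of $\varsigma$ follows immediately from the multilinear-vanishing criterion.
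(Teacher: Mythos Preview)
Your treatment of $\xi$ and $\zeta$ is essentially the paper's argument rephrased: checking vanishing on repeated tuples is equivalent, in characteristic $\neq 2$, to the linearizations the paper carries out, so that part is fine.

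For $\varsigma$ you take a harder road than necessary, and the Sagle-based sketch does not close as you describe. Iterating (\ref{sagle1}) on the first slot gives
\[
-3\,J(xy,xz,u)=x\,J(y,xz,u)+2\,(xz)\,J(u,x,y),
\]
and the analogous expansion on the second slot gives the symmetric expression; comparing the two does \emph{not} cancel the residual terms $x\,J(y,xz,u)$ and $(xz)\,J(u,x,y)$, which are not killed by (\ref{trivialmalcev}) and reordering alone. The missing ingredient is precisely the skew-symmetry of $\xi$ that you have already established: for instance $J(y,xz,u)=-\xi(y,u,x,z)=\xi(x,u,y,z)=J(x,u,yz)$, whence $x\,J(y,xz,u)=-J(x,u,yz)\,x=0$, and similarly for the other term. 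So your route can be salvaged, but only by feeding $\xi$'s skew-symmetry back in.

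The paper exploits this dependence far more directly and avoids the ``hard'' identity altogether. It writes
\[
\varsigma(x_1,\dots,x_5)=J(x_5,x_1x_2,x_3x_4)=\xi(x_5,\,x_1x_2,\,x_3,\,x_4)
\]
and uses the already-proven skew-symmetry of $\xi$ in its first versus third and fourth arguments to obtain skew-symmetry of $\varsigma$ in $(x_5,x_3)$ and $(x_5,x_4)$; interchanging the roles of $x_1x_2$ and $x_3x_4$ handles $(x_5,x_1)$ and $(x_5,x_2)$. Since the transpositions $(i,5)$ for $i=1,2,3,4$ generate $S_5$, full skew-symmetry of $\varsigma$ follows immediately, and $J(xy,xz,u)=0$ becomes a corollary rather than a prerequisite.
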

\begin{proof}
Since $\mathsf{A}$ is anticommutative and due to the properties of the Jacobian it is clear that $\xi (x_{1},x_{2},x_{3},x_{4})= J(x_{1},x_{2},x_{3}x_{4})$ is skew-symmetric in $x_{1}$ and $x_{2}$ as well as in $x_{3}$ and  $x_{4}$.   If  (\ref{trivialmalcev}) holds for $\mathsf{A}$, the map $\xi$ is also skew-symmetric for   $x_{1}$ and $x_{3}$ since  $J((x_{1}+x_{3}),x_{2},(x_{1}+x_{3})x_{4})=0$  can be rewritten  by linearity as
\begin{align*}
 J(x_{1},x_{2},x_{1}x_{4})+J(x_{3},x_{2},x_{3}x_{4})+J(x_{1},x_{2},x_{3}x_{4})+J(x_{3},x_{2},x_{3}x_{4})=0.
\end{align*}
Using (\ref{trivialmalcev}) we obtain:
\begin{align*}
J(x_{1},x_{2},x_{3}x_{4})=-J(x_{3},x_{2},x_{1}x_{4}).
\end{align*}
 Notice that if $\xi$ is skew-symmetric in $x_1$ and $x_3$ it is also skew-symmetric in $x_1$ and $x_4$:
 $$J(x_{1},x_{2},x_{3}x_{4})=-J(x_{1},x_{2},x_{4}x_{3})=J(x_{4},x_{2},x_{3}x_{1})=-J(x_{4},x_{2},x_{1}x_{3}).$$

  The proof that the map $\zeta$ is skew-symmetric is similar: due to the properties of the Jacobian $\zeta$ is skew-symmetric in $x_1$ and $x_2$ as well as in $x_1$ and $x_3$.   Now by linearity and applying (\ref{trivialmalcev}),
  the fact that $J((x_{1}+x_{4}),x_{2},x_{3})(x_{1}+x_{4})=0$ implies the skew-symmetry in $x_1$ and $x_4$.\\

As above, $\varsigma$ is skew-symmetric in  $x_1$ and  $x_2$ as well as in $x_1$ and $x_4$.  The skew-symmetry of $\xi$ can be used to prove that $\varsigma$ is skew-symmetric in  $x_{i}$ and $x_{5}$ for $i=1,2,3,4$. Notice that
$J(x_{1}x_{2},x_{3}x_{4}, x_{5})=J(x_{5}, x_{1}x_{2},x_{3}x_{4})$ due to an even permutation.  Since $\xi$ is skew-symmetric in $x_1$ and $x_3$, $\varsigma$ is skew-symmetric in $x_5$ and $x_3$: $J(x_{5}, x_{1}x_{2},x_{3}x_{4})=-J(x_{3}, x_{1}x_{2},x_{5}x_{4})$,  as well as in  $x_5$ and $x_4$: $J(x_{5}, x_{1}x_{2},x_{3}x_{4})=-J(x_{4}, x_{1}x_{2},x_{3}x_{5})$.  To  verify that $\varsigma$ is skew-symmetric in $x_5$ and $x_2$ we also use the skew-symmetry of $\xi$, suitable permutations and the properties of the Jacobian:
$$J(x_{5}, x_{1}x_{2},x_{3}x_{4})=-J(x_{5}, x_{3}x_{4},x_{1}x_{2})=-J(x_{1}, x_{5}x_{2},x_{3}x_{4}),$$
and similarly for $x_5$ and $x_1$. The result follows since the permutations $(i,5)$ for $i=1,2,3,4$ generate the symmetric group in $\{1,2,3,4,5\}$.

\end{proof}

\begin{theo}\label{th}
Let  $\mathsf{A}$ be an anticommutative algebra satisfying (\ref{trivialmalcev}). Then
\begin{enumerate}
\item[A.] $\mathsf{A}^{3}$ belongs to the Lie Kernel $N(\mathsf{A})$. In particular $\mathsf{A}/N(\mathsf{A})$ is a nilpotent Lie algebra.
\item[B.] $\mathsf{A}^{2}$ is a Lie algebra.
\item[C.] $\mathsf{A}$ satisfies the identity:
\begin{align*}
2wJ(x,y,z)=3J(w,x,yz).
\end{align*}
\item[D.] The following hold:
\begin{enumerate}
\item[i.]  $J(\mathsf{A}^{i},\mathsf{A}^{j},\mathsf{A}^{k})=0 \qquad \ \quad\text{if} \qquad i+j+k\geq5$.
\item[ii.] $J(\mathsf{A}^{i},\mathsf{A}^{j},\mathsf{A}^{k})\mathsf{A}^{r}=0 \qquad \text{if} \qquad i+j+k+r\geq5$.
\item[iii.]  $(J(\mathsf{A},\mathsf{A},\mathsf{A})\mathsf{A})\mathsf{A}=0$.
\end{enumerate}
\item[E.] $J(\mathsf{A},\mathsf{A},\mathsf{A})^{2}=0$.\\
\end{enumerate}
\end{theo}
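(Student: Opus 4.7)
The plan is to prove Parts C, A, B, D, E in that order, using Sagle's identities (2.15) and (2.26) together with the three skew-symmetries of $\xi, \zeta, \varsigma$ from the previous lemma. Two derived identities, both consequences of (\ref{trivialmalcev}), cyclic symmetry of $J$, and anticommutativity, will carry most of the weight: the \emph{balance identity} $J(w,y,z)\,x = w\,J(x,y,z)$ (from swapping positions $1$ and $4$ of $\zeta$ and then applying anticommutativity) and the \emph{$3$-cycle identity} $J(w,x,y)\,z = J(w,y,z)\,x$ (from the even $3$-cycle on positions $2,3,4$ of $\zeta$).

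For Part C, I plug Sagle's (2.15) $3J(wx,y,z) = J(x,y,z)w - J(y,z,w)x - 2J(z,w,x)y + 2J(w,x,y)z$ into the cyclic symmetry of $J$ and the two derived identities. Each of the four terms on the right collapses to $\pm J(w,y,z)\,x$, and collecting yields $3J(wx,y,z) = 2J(w,y,z)\,x = 2w\,J(x,y,z)$ via the balance identity. A cyclic rotation of $J$ on the left plus relabeling then gives Part C.

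For Part A --- the main obstacle --- I first apply Part C twice to reduce the target $J((ab)c,u,v)$ to a constant multiple of the quintuple product $(J(c,a,b)\,u)\,v$. To show this vanishes, I invoke Sagle's (2.26), which in our algebra (via the balance identity) reads $J(wx,y,z) = 2J(w,y,z)\,x + 2J(yz,w,x)$. Combining this with $J(wx,y,z) = \tfrac{2}{3}J(w,y,z)\,x$ from Part C yields an explicit expression for $J(yz,w,x)$; matching it against the independent evaluation $J(yz,w,x) = J(w,x,yz)$ via a second application of Part C produces the key relation that forces the quintuple product to vanish. The delicate orchestration of signs between two iterated Sagle identities and the three skew-symmetries is the crux of the argument.

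With Parts A and C in hand, the rest follows by direct application. For D.iii, two applications of Part C in the form $w\,J(p,q,r) = \tfrac{3}{2}J(w,p,qr)$ transform $(J(x,y,z)\,a)\,b$ into a constant multiple of $J(b,a,x(yz))$; since $x(yz) \in \mathsf{A}^3 \subseteq N(\mathsf{A})$ by Part A, the outer Jacobian vanishes. For Part B, applying Part C to $J(ab,cd,ef)$ gives $\tfrac{2}{3}(ab)J(cd,e,f) = \tfrac{2}{3}J(ab,e,f)(cd)$, and since $J(ab,e,f)$ is itself a multiple of $J(a,e,f)\,b \in J(\mathsf{A},\mathsf{A},\mathsf{A})\mathsf{A}$, D.iii forces the product with $(cd)$ to vanish. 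D.i splits by whether some index is at least $3$ (apply Part A directly) or all indices lie in $\{1,2\}$ with total at least $5$ (the case $(2,2,1)$ admits an analogous reduction to D.iii, and $(2,2,2)$ is Part B). D.ii reduces via Part C to D.i and D.iii. Finally, Part E follows from the balance identity applied to $J(x_1,y_1,z_1) \cdot J(x_2,y_2,z_2) = \pm J(J(x_2,y_2,z_2),y_1,z_1)\,x_1$; the outer Jacobian vanishes because $J(x_2,y_2,z_2) \in \mathsf{A}^3 \subseteq N(\mathsf{A})$.
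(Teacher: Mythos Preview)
Your derivation of Part C via Sagle (2.15) and the total skew-symmetry of $\zeta$ is correct and matches the paper's conclusion (the paper instead reaches it from Sagle (2.14) and the skew-symmetry of $\xi$). Parts B, D, E are handled essentially as in the paper once A and C are available. The genuine gap is in your route to Part A.

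Your combination of (2.26) with Part C does not merely kill the quintuple product $(J(c,a,b)u)v$; it forces the \emph{four}-variable identity $J(w,y,z)x=0$. Indeed, from (2.26) plus balance you get $J(wx,y,z)=2J(w,y,z)x+2J(yz,w,x)$; Part C gives $J(wx,y,z)=\tfrac{2}{3}J(w,y,z)x$, and since $J(yz,w,x)=J(w,x,yz)$ cyclically, Part C also gives $J(yz,w,x)=\tfrac{2}{3}J(w,y,z)x$. Substituting yields $\tfrac{2}{3}J(w,y,z)x=2J(w,y,z)x+\tfrac{4}{3}J(w,y,z)x$, hence $J(w,y,z)x=0$. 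But that is identity (\ref{zerojacobian2}), the defining property of \emph{first}-type Malcev algebras, and would collapse the second type onto the first --- contradicting the entire point of the paper and Shestakov's 29-dimensional example. Notice that no fifth variable ever enters your computation: you are matching two evaluations of the same four-variable quantity, so the output is necessarily a four-variable identity. The contradiction shows that the form of (2.26) you are relying on cannot be correct as stated; the paper itself only invokes (2.26) in Proposition~1 under a hypothesis that annihilates three of its four terms, so any sign issue there is invisible to that argument but fatal to yours.

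The paper's own proof of Part A bypasses (2.15) and (2.26) completely. It uses that the five-linear map $\varsigma(x_1,\ldots,x_5)=J(x_1x_2,x_3x_4,x_5)$ is totally skew-symmetric (previous lemma), while swapping the two product blocks $(x_1,x_2)\leftrightarrow(x_3,x_4)$ is an even permutation of the five slots but an odd permutation of the first two arguments of $J$; thus $\varsigma$ is simultaneously symmetric and antisymmetric under this swap, hence identically zero. This gives $J(\mathsf{A}^2,\mathsf{A}^2,\mathsf{A})=0$, and the skew-symmetry of $\xi$ then transfers it to $J(\mathsf{A}^3,\mathsf{A},\mathsf{A})=0$. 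This parity argument is genuinely five-variable from the start, needs no division by $3$, and avoids the delicate sign bookkeeping that derailed your approach.
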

\begin{proof}

Notice that due to anticommutativity  we know  that  $$J(x_{1}x_{2},x_{3}x_{4},x_{5})=-J(x_{3}x_{4},x_{1}x_{2},x_{5})$$ for any elements $x_{1},x_{2},x_{3},x_{4},x_{5}$ of $\mathsf{A}$.
In the previous Lemma, the map $\varsigma$ was proved to be skew-symmetric. Then we can write

\begin{align*}
J(x_{1}x_{2},x_{3}x_{4},x_{5})=-J(x_{3}x_{2},x_{1}x_{4},x_{5})=J(x_{3}x_{4},x_{1}x_{2},x_{5}).
\end{align*}
We conclude that $J(\mathsf{A}^{2},\mathsf{A}^2,\mathsf{A})=0$.  Now, due to the skew-symmetry of the map $\xi$, for every  $J(xy,ab,c)\in J(\mathsf{A}^{2},\mathsf{A}^2,\mathsf{A})$  we have

$$J(xy,ab,c)=-J(x(ab),y,c).$$

In the same way, by using the skew-symmetry of $\varsigma$  followed by the skew-symmetry of the map $\xi$ we get:

$$J(xy,ab,c)=-J(xy,ac,b)=J(x(ac),y,b).$$

Hence
\begin{align*}
0=J(\mathsf{A}^{2},\mathsf{A}^2,\mathsf{A})=J(\mathsf{A}^{3},\mathsf{A},\mathsf{A}),
\end{align*}
which implies that $\mathsf{A}^{3}\subseteq N(\mathsf{A})$. On the other hand, $\mathsf{A}/N(\mathsf{A})$ is a Lie algebra since $J(\mathsf{A},\mathsf{A},\mathsf{A})\subset\mathsf{A}^{3}\subseteq N(\mathsf{A})$ and  $N(\mathsf{A})$
is zero in $\mathsf{A}/N(\mathsf{A})$. This proves (\textit{A}), (\textit{B}) and part (\textit{i})  of (\textit{D}) of the Theorem.\\

On the other hand, since $\mathsf{A}$ is a Malcev algebra, once again according to Sagle's properties for Malcev algebras ([Sa], p.429, L. 2.10, 2.14), for every $x,y,z,w$ of $\mathsf{A}$ we should have
$$2wJ(x,y,z)=J(w,x,yz)+J(w,y,zx)+J(w,z,xy).$$

It is enough to use this identity and the skew-symmetry of the map $\xi$ to prove (\textit{C}): each of the terms can be written as $J(w,y,zx)=-J(w,y,xz)=J(w,x,yz)$ and $J(w,z,xy)=-J(w,x,zy)=J(w,x,yz)$. \\

The last part of  (\textit{A}) follows from (\textit{C}) and (\textit{i}): from (\textit{C}) we know that $2\mathsf{A}J(\mathsf{A},\mathsf{A},\mathsf{A})=3J(\mathsf{A},\mathsf{A},\mathsf{A}^{2})$.  This implies  that
 $2\mathsf{A}J(\mathsf{A},\mathsf{A},\mathsf{A}^{2})=3J(\mathsf{A},\mathsf{A},\mathsf{A}^{3})=0$
 due to (\textit{i}).  We conclude that $\mathsf{A}J(\mathsf{A},\mathsf{A},\mathsf{A}^{2})=0$. Therefore
$$(2\mathsf{A}J(\mathsf{A},\mathsf{A},\mathsf{A}))\mathsf{A}=\left(3J(\mathsf{A},\mathsf{A},\mathsf{A}^2)\right)\mathsf{A}=3J(\mathsf{A},\mathsf{A},\mathsf{A}^2)\mathsf{A}=0.$$
Since $(2\mathsf{A}J(\mathsf{A},\mathsf{A},\mathsf{A}))\mathsf{A}=-2(J(\mathsf{A},\mathsf{A},\mathsf{A})\mathsf{A})\mathsf{A}$ we conclude that $(J(\mathsf{A},\mathsf{A},\mathsf{A})\mathsf{A})\mathsf{A}=0$.\\

 Finally  (\textit{C}) implies that  $J(\mathsf{A},\mathsf{A},\mathsf{A})\mathsf{A}^{3}=0$.  Since  $J(\mathsf{A},\mathsf{A},\mathsf{A})\subseteq\mathsf{A}^{3}$, we get:

\begin{align*}
 J(\mathsf{A},\mathsf{A},\mathsf{A})^{2}\subseteq J(\mathsf{A},\mathsf{A},\mathsf{A})\mathsf{A}^{3}=0.
\end{align*}
\end{proof}

\begin{coro}
If $\mathsf{A}$ is a  semiprime algebra, then $\mathsf{A}$ is a Lie algebra.
\end{coro}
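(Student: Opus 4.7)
The plan is to produce a nonzero ideal of $\mathsf{A}$ with trivial square that contains $J(\mathsf{A},\mathsf{A},\mathsf{A})$; semiprimeness will then force this ideal to be zero, and hence force the whole Jacobian of $\mathsf{A}$ to vanish. Abbreviating $J := J(\mathsf{A},\mathsf{A},\mathsf{A})$, the candidate I would put forward is
\[
I \;=\; J + J\mathsf{A}.
\]

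My first step would be to check that $I$ is indeed an ideal. Using Theorem~\ref{th}(D)(iii), which asserts $(J\mathsf{A})\mathsf{A}=0$, one gets
\[
I\mathsf{A} \;=\; J\mathsf{A} + (J\mathsf{A})\mathsf{A} \;=\; J\mathsf{A} \;\subseteq\; I,
\]
and anticommutativity then gives $\mathsf{A} I \subseteq I$. My second step would be to verify $I^{2}=0$, which splits into four pieces: $J\cdot J = 0$ is exactly Theorem~\ref{th}(E); both $(J\mathsf{A})\cdot J$ and $(J\mathsf{A})(J\mathsf{A})$ are contained in $(J\mathsf{A})\mathsf{A}$ and so vanish by Theorem~\ref{th}(D)(iii); and $J\cdot(J\mathsf{A}) = -(J\mathsf{A})\cdot J = 0$ by anticommutativity.

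The third and final step is to invoke semiprimeness: any ideal of $\mathsf{A}$ whose square vanishes must be zero, so $I=0$. In particular $J(\mathsf{A},\mathsf{A},\mathsf{A})=0$, which, together with anticommutativity, is exactly the defining identity of a Lie algebra. I do not expect a real obstacle here: all the hard work was already performed in Theorem~\ref{th}, where the identity (\ref{trivialmalcev}) was pushed as far as parts (D)(iii) and (E). The present statement amounts to organizing those two consequences into an ideal with trivial square and letting the hypothesis do the rest.
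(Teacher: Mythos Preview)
Your argument is correct and follows the same strategy as the paper: exhibit an ideal of square zero containing $J=J(\mathsf{A},\mathsf{A},\mathsf{A})$ and let semiprimeness kill it. Note that by Theorem~\ref{th}(C) one already has $J\mathsf{A}\subseteq J$, so $J$ itself is an ideal and your $I=J+J\mathsf{A}$ in fact equals $J$; the paper simply uses $J$ directly together with part~(E), so your extra summand is harmless but unnecessary.
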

\begin{proof}

Suppose that $\mathsf{A}$ is a semiprime algebra.  Since, by the previous Theorem, the ideal $J(\mathsf{A},\mathsf{A},\mathsf{A})$  satisfies  $J(\mathsf{A},\mathsf{A},\mathsf{A})^2=0$ it follows that $J(\mathsf{A},\mathsf{A},\mathsf{A})=0$, namely, $\mathsf{A}$ is a Lie algebra.
\end{proof}

\begin{theo}

A $3 $-generated Malcev algebra  $\mathsf{A}$ of the second type  is a Malcev algebra of the first type.

\end{theo}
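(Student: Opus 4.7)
The plan is to use the characterization established in the preceding Lemma: in characteristic different from $2$ and $3$, a Malcev algebra is of the first type if and only if it satisfies $J(x,y,uv)=0$. It therefore suffices to verify this identity in every $3$-generated Malcev algebra $\mathsf{A}$ of the second type.

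Consider the multilinear map $\xi(x_1,x_2,x_3,x_4) := J(x_1,x_2,x_3 x_4)$. By the skew-symmetry Lemma already proved above, $\xi$ is skew-symmetric in all four arguments. I would next observe that $\xi$ vanishes on a large region. By definition of the Lie kernel, $J$ is zero whenever one of its arguments lies in $N(\mathsf{A})$, so $\xi(x_1,x_2,x_3,x_4)=0$ whenever $x_1$ or $x_2$ lies in $N(\mathsf{A})$. Moreover, part (A) of Theorem~\ref{th} gives $\mathsf{A}^3 \subseteq N(\mathsf{A})$, so if either $x_3$ or $x_4$ lies in $\mathsf{A}^2$ then $x_3 x_4 \in \mathsf{A}^3 \subseteq N(\mathsf{A})$ and again $\xi=0$. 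Invoking the full $4$-fold skew-symmetry of $\xi$ now extends these vanishing conditions to \emph{any} slot: $\xi$ vanishes as soon as some argument lies in $I := \mathsf{A}^2 + N(\mathsf{A})$.

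By multilinearity, $\xi$ therefore descends to a skew-symmetric $4$-linear map $\bar{\xi}\colon (\mathsf{A}/I)^{\otimes 4} \to \mathsf{A}$. Because $\mathsf{A}$ is $3$-generated, $\mathsf{A}/\mathsf{A}^2$ is spanned by the images of its three generators, and consequently $\dim(\mathsf{A}/I) \leq \dim(\mathsf{A}/\mathsf{A}^2) \leq 3$. A skew-symmetric $4$-linear form on a vector space of dimension at most $3$ is identically zero, since its fourth exterior power is trivial. Hence $J(x,y,uv)=\xi(x,y,u,v)=0$ for all $x,y,u,v \in \mathsf{A}$, and $\mathsf{A}$ is a Malcev algebra of the first type by the Lemma.

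The step I expect to require the most care is the middle one: converting the two ``obvious'' vanishing conditions (one for arguments in $N(\mathsf{A})$, one for arguments whose product lies in $\mathsf{A}^3$) into a single condition that $\xi$ vanishes whenever \emph{any} argument lies in $I$. This pivots entirely on the $4$-fold skew-symmetry of $\xi$ proved in the preceding Lemma, which in turn used the identity (\ref{trivialmalcev}). Once that reduction is in place, the $3$-generated hypothesis enters only through the purely linear-algebraic observation that $\mathsf{A}/\mathsf{A}^2$ admits no nonzero alternating $4$-form.
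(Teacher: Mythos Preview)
Your argument is correct and follows the same strategy as the paper: both reduce to identity~(\ref{zerojacobian}), both use $\mathsf{A}^{3}\subseteq N(\mathsf{A})$ from Theorem~\ref{th}(A), and both exploit the vanishing encoded in the skew-symmetry Lemma for $\xi$. The paper carries this out by fixing generators $a,b,c$, expanding $J(x,y,c\omega)$ as a sum of terms $J(a,b,c\cdot\text{monomial})$, and killing each term either because the monomial has degree $\geq 2$ (so $c\cdot\text{monomial}\in\mathsf{A}^{3}\subseteq N(\mathsf{A})$) or via identity~(\ref{trivialmalcev}) directly (e.g.\ $J(a,b,cb)=0$); your version repackages the same vanishing as the single statement that $\xi$ factors through $\Lambda^{4}(\mathsf{A}/(\mathsf{A}^{2}+N(\mathsf{A})))$, which is zero since that quotient is spanned by three elements. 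This is a cleaner and more complete presentation of the same proof.
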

\begin{proof}
Let $\mathsf{A}$ be a Malcev algebra of the second type generated by the elements $a,b,c.$
Let $\omega$ be an arbitrary element of $\mathsf{A}$. Since $\mathsf{A}$ is a $3$- generated algebra, $\omega$
is a formal linear combination of products of the generators $a,b,c$.

Let us show that the  identity (\ref{zerojacobian}) holds in $\mathsf{A}$. First, consider  $J(x,y,c\omega)$, where $c$ is an arbitrary generator of $\mathsf{A}$. Since the Jacobian is a multilinear operator, $J(x,y,c\omega)$ can be

simplified using   (\textit{4},\,\textit{i}) of Theorem \ref{th}:
\begin{align*}
J(x,y,c\omega)=\ell_{1}J&(a,b,ca)+\ell_{2}J(a,b,cb)+\ell_{4}J(a,b,c(ab))+
\ell_{5}J(a,b,c(ac))+\\&\ell_{6}J(a,b,c(bc))+\ell_{7}J\left(a,b,c((ab)c)\right)+\ell_{8}J\left(a,b,c(a(bc))\right)+\cdots
\end{align*}

Using again  the fact that $\mathsf{A}^{3} \subseteq N(\mathsf{A})$  and  $J(a,b,cb)=0$, we conclude that

$J(x,y,c\omega)=0$ for any $\omega$ in  $\mathsf{A}$.\\

The general case, $J(x,y,uv)=0$  where $x,y,u$ and $v$ are arbitrary elements of $\mathsf{A}$,
 can be handled by analogous considerations.

\end{proof}

Due to Malcev -Kuzmin theory [Kuz], we have

\begin{coro}
A Malcev  algebra of the second type  is, in fact, a tangent algebra of a  local almost left automorphic Moufang loop.
The tangent algebra of every smooth almost left  automorphic Moufang loop is  a Malcev algebra of the second type.
\end{coro}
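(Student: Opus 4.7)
The plan is to combine the preceding Theorem (every $3$-generated Malcev algebra of the second type is of the first type) with the classical Malcev--Kuzmin correspondence between Malcev algebras and local analytic Moufang loops, together with the result cited from [CS1] and [K3], [SLV] that, on the smooth side, left automorphic Moufang loops correspond precisely to Malcev algebras of the first type.

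For the first direction, I would start with a Malcev algebra $\mathsf{A}$ of the second type. By Malcev--Kuzmin, $\mathsf{A}$ integrates to a local analytic Moufang loop $L$ whose tangent algebra is $\mathsf{A}$. To show that $L$ is almost left automorphic, pick any three elements of $L$ and consider the local subloop $L_3$ they generate; its tangent algebra is the subalgebra $\mathsf{A}_3$ of $\mathsf{A}$ generated by the corresponding three tangent vectors. Because the defining identity (\ref{trivialmalcev}) of Malcev algebras of the second type is inherited by subalgebras, $\mathsf{A}_3$ is itself a Malcev algebra of the second type on three generators. The previous theorem then upgrades $\mathsf{A}_3$ to a Malcev algebra of the first type, so by the result of [CS1] (together with [K3], [SLV]) the loop $L_3$ is left automorphic. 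Since this holds for every triple, $L$ is almost left automorphic by definition.

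For the second direction, let $L$ be a smooth almost left automorphic Moufang loop with tangent algebra $\mathsf{A}$. Given any three elements $x,y,z\in\mathsf{A}$, take smooth local one-parameter curves realizing them; the subloop of $L$ generated by the corresponding group elements is left automorphic by hypothesis, and so its tangent subalgebra $\mathsf{A}_3=\langle x,y,z\rangle\subseteq\mathsf{A}$ is a Malcev algebra of the first type. By Theorem \ref{CS}, $\mathsf{A}_3$ then satisfies $J(x,y,z)x=J(x,y,xz)=0$. Since $x,y,z$ were arbitrary and this identity only involves three elements, it holds on all of $\mathsf{A}$; that is, $\mathsf{A}$ is a Malcev algebra of the second type.

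The main obstacle is not algebraic but foundational: one has to be sure that the Malcev--Kuzmin correspondence behaves functorially on subobjects, so that the tangent algebra of the local subloop generated by $k$ elements is exactly the Malcev subalgebra generated by the corresponding tangent vectors. This is part of the standard Malcev--Kuzmin machinery [Kuz], and once it is invoked the argument is a one-line reduction to the previous theorem in each direction.
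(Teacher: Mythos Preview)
Your proposal is correct and is precisely the argument the paper has in mind: the paper offers no proof beyond the phrase ``Due to Malcev--Kuzmin theory [Kuz]'' preceding the corollary, and your write-up simply unpacks this by combining the Malcev--Kuzmin correspondence, the preceding theorem on $3$-generated algebras, and the [CS1]/[K3]/[SLV] characterization of tangent algebras of left automorphic Moufang loops as Malcev algebras of the first type. The functoriality-on-subobjects issue you flag is exactly what is being absorbed into the citation of [Kuz].
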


{\bf Remark} The question of the existence of global  almost left automorphic Moufang loop, which corresponds to the the given 
Malcev algebra of the second type is solved positevly  in the article [CGRS].

\section{Example}

 In this section we discuss an example of a $23$-dimensional algebras of second type which is not an algebra of first type. \\

 Let $F$ be a free anti commutative algebra generated by $X=\{x_1, x_2, x_3, x_4\}$, nilpotent of class $4$, it means $F^{4}=0$. Let $I$ be a subspace with  a basis of all $X$-words, $w=w(x_1, x_2, x_3, x_4)$, such that some letter $x_i$ appears in $w$  two or more times. It is clear that $I$ is an ideal of $F$.  Let's denote by $\mathrm{A}$ the factor algebra $F/I$. Then a basis of $\mathrm{A}$ has 22 elements:  $B=\cup ^{3}_{i=1}B_i$ with 

 \begin{align*}
 B_1=&X,\\
 B_2=&\{ [x_i,x_j] \ | 1\leq i < j \leq 4  \},\\
 B_3=&\{ [x_i,x_j,x_k] \ | 1\leq i < j \leq 4, 1\leq k\leq 4, \ k\neq i, j  \}.
 \end{align*}

 The algebra $\mathrm{A}$ is a Malcev algebra.\\

 Let's define an antisymmetric bilinear function $\psi:\mathrm{A}\times \mathrm{A}\longmapsto k$  given by the following values:\\

 $$
  \begin{array}{ll}
 \psi([x_1,x_2], [x_3,x_4])=2, &\psi([x_1,x_3], [x_2,x_4])=-2,\\
  \psi([x_1,x_4], [x_2,x_3])=2,&\psi([x_2,x_3,x_1], x_4)=-3,\\
 \psi([x_2,x_4,x_1], x_3)=3,&\psi([x_2,x_4,x_3], x_1)=-1,\\
 \psi([x_3,x_4,x_1], x_2)=-3,& \psi([x_3,x_4,x_2], x_1)=1,\\
 \end{array}
 $$

 and $\psi(v,w)=0$ for all other values.\\

 Consider a space $\tilde{\mathrm{A}}=\mathrm{A}\oplus kv$ and define a product on $\tilde{\mathrm{A}}$ as follows:
 \begin{align}
[(a,\alpha v),(b, \beta v)]=([a,b], \psi(a,b)) \label{exampleproduct}
 \end{align}

Direct computations show that $\tilde{\mathrm{A}}$ is a second type Malcev algebra. On the other hand, if we set $x_i=(x_i, 0)$, then
\begin{align*}
[ J(x_1,x_2,x_3),x_4 ]&=[x_1,x_2,x_3,x_4]+[x_2,x_3,x_1,x_4]-[x_1,x_3,x_2,x_4]\\
&=(0, -3v)\neq 0.
\end{align*}

Hence $\tilde{\mathrm{A}}$ is not an algebra of the first type.

\section*{Acknowledgments}
The authors thank  Alberto Elduque, Alexander Grishkov and Ivan Shestakov for useful comments.

\section*{Funding}

The first author thanks  CONACYT and Universidad Pedag\'ogica Nacional Unidad 201 Oaxaca for supporting the C\'atedras CONACYT project  1522.   The second author thanks CNPq (Brasil), grant 308221/2012-5 and the third author thanks SNI and FAPESP grant process 2015/07245-4 for support.

\newpage

\end{document}